\theoremstyle{definition}
\theoremstyle{plain}
\newtheorem{thm}{Theorem}[section]
\newtheorem{lem}[thm]{Lemma}
\newtheorem{prop}[thm]{Proposition}
\theoremstyle{definition}
\newtheorem{rmk}[thm]{Remark}
\newcommand{\mb}[1]{\mathbb{#1}}
\renewcommand{\phi}{\varphi}
\def\ulabel#1#2{\@bsphack\if@filesw {\let\thepage\relax \def\protect{\noexpand\noexpand\noexpand}%
\xdef\@gtempa{\write\@auxout{\string
\newlabel{#1}{{#2 \@currentlabel}{\thepage}}}}}\@gtempa
\if@nobreak \ifvmode\nobreak\fi\fi\fi\@esphack} \makeatother
\title{Inverse limits of finite rank free groups}
\author[G.R. Conner]{G. R. Conner}
\address{G.R. Conner\\ Department of Mathematics\\ Brigham Young University\\292-A TMCB\\
Provo UT 84602; USA}
\email{conner@math.byu.edu}
\author[C. Kent]{C. Kent}
\address{C. Kent\\ Department of Mathematics\\ Vanderbilt University\\Stevenson Center 1212
Nashville TN 37240; USA}
\email{curt.kent@vanderbilt.edu}
\begin{document}

\maketitle

\begin{abstract}

We will show that the inverse limit of finite rank free groups with surjective connecting homomorphism is isomorphic either to a finite rank free group or to a fixed universal group.  In other words, any inverse system of finite rank free groups which is not equivalent to an eventually constant system has the universal group as its limit. This universal inverse limit is naturally isomorphic
to the first shape group of the Hawaiian earring. We also give an example
of a homomorphic image of a Hawaiian earring group which lies in the inverse limit of free groups but is neither a free group
nor a Hawaiian earring group.\end{abstract}

One of the first to consider the inverse limits of finite rank free groups
was Higman.  In \cite{hig}, he studies the inverse limit of finite rank
free groups which he calls  the unrestricted free product of countably
many copies of $\mb Z$. There he proves that this group is not a free
group and that each of its free quotients has finite rank. He considers a
subgroup $P$ of the unrestricted product which turns out to be a Hawaiian
earring group but does not prove it there. In \cite{smit}, de Smit gives a
proof that the Hawaiian earring group embeds in an inverse limit of free
groups and gives a characterization of the elements of the image.
Daverman and Venema in \cite{DV} showed that a one-dimensional Peano
continuum either has the shape of a finite bouquet of circles or of a
Hawaiian earring.   Hence any inverse limit of finite rank free groups
that arises from the inverse system of a one-dimensional Peano continuum
is either a finite rank free group or the standard Hawaiian inverse limit.
In section \ref{seclimit}, we will show that the result of Daverman and
Venema can be generalized in the following way.  Every inverse limit of
finite rank free groups with surjective connecting homomorphisms is isomorphic to a free group or the standard Hawaiian inverse limit.  Hence being the shape group of a one-dimensional Peano continuum is not necessary.

In \cite{ce}, Conner and Eda show that the fundamental group of a
one-dimensional Peano continuum which is not semilocally simply connected
at any point determines the homotopy type of the space.  This was done
using uncountable homomorphic images of Hawaiian earring groups. It was
believed that any uncountable homomorphic image of a Hawaiian earring
group which embedded in an inverse limit of free groups was itself a
Hawaiian earring group.   In Section \ref{secimage}, we will show that
there exists an uncountable homomorphic image of a Hawaiian earring group
which embeds in an inverse limit of free groups but is not a Hawaiian
earring group or a free group.  This is done by using two propositions
which were originally proved by Higman in \cite{hig} to construct a
homomorphism with our desired image.

\section{Definitions}

A Hawaiian Earring group, which we will denote by $\mathbb H $,
is the fundamental group of the one-point compactification of a
sequences of disjoint open arcs. The Hawaiian earring group is uncountable and locally free. Cannon and Conner in \cite {cc1} and \cite{cc3} showed that the Hawaiian earring group is generated in the sense of infinite products by a countable sequence of loops corresponding to the disjoint arcs, where an infinite product is legal if each loop is transversed only finitely many times. (For more information on infinite products, see \cite{cc3}.) The Hawaiian earring can be realized in the plane as the union of circles centered at $(0,\frac1n)$ with radius $\frac1n$.  We will use $\textbf{E}$ to denote this subspace of the plane and $a_n$ to denote the circle centered at $(0,\frac1n)$ with radius $\frac1n$.

The group $\mb H$ is generated, in the sense of infinite products, by an infinite set of loops which correspond to the circles $\{a_n\}$. When there is no chance of confusion, we will refer to this infinite generating set for the fundamental group of $\textbf{E}$ as $\{a_n\}$, i.e. $a_n$ represents the loop which transverses counterclockwise one time the circle of radius $\frac 1n$ centered at $(0,\frac1n)$.  We will frequently denote the base point $(0,0)$ of $\textbf{E}$ by just $0$.

An \emph{inverse system of groups} is a collection of groups $F_\alpha$ indexed by a partially ordered set $J$ along with a collection of homomorphisms $\{\phi_{\alpha,\beta}:F_\alpha\to F_\beta \ |\ \text{if } \alpha\geq\beta\}$, which are called \emph{connecting homomorphisms}.  The connecting homomorphisms must satisfy the following condition.  For every triple $\{\alpha, \beta, \gamma\}$ such that $\alpha\geq\beta\geq\gamma$, the connecting homomorphisms satisfy $\phi_{\alpha,\beta}\circ\phi_{\beta,\gamma} = \phi_{\alpha,\gamma}$.  An inverse limit of an inverse system is the subgroup of the direct product which consists of functions $f: J\to \bigcup\limits_{\alpha\in J} F_\alpha$  such that $f(\beta)\in F_\beta$ and $f(\beta) =\phi_{\alpha,\beta}(f(\alpha))$, for every $\alpha,\beta$ such that $\beta \leq \alpha$.

\section{Inverse Limits of Finite Rank Free groups}\label{seclimit}

We will now describe the inverse system of finite rank free groups constructed by Higman. Let $A_i=\langle \mathbf{a}_1, \cdots \mathbf{a}_i\rangle$ be the free group on $i$ generators with the natural inclusions $A_1\subset  \cdots \subset A_i\subset A_{i+1}\subset\cdots$.  For $i\geq j$, the connecting homomorphisms $P_{i,j}: A_i \to A_j$ sends $\mathbf a_k$ to $\mathbf a_k$ for $k\leq j$ and $\mathbf a_k$ to $1$ for $k>j$.  We will denote this inverse limit of this system by $\mb G$.  We will use $P_i: \mb G \to A_i$ to denote the standard projection homomorphism.

Eda \cite{edaprivate} pointed out to the authors that Proposition \ref{a} doesn't hold in the case that the connecting homomorphisms are not surjective.  However, it turns out the the proof of Proposition \ref{a} is still sufficient to enumerate all possible isomorphism types of inverse limits of countable rank free groups (see Remark \ref{remark2}).

\begin{prop}\label{a}
Let $G$ be an inverse limit of finite rank free groups, $F_i$, with surjective connected homomorphisms indexed over the natural numbers.  Then $G$ is isomorphic to $\mb G$ or a finite rank free group.
\end{prop}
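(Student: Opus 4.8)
The plan is to argue that the non-decreasing sequence of ranks either stabilizes or diverges, and to treat these two cases separately. First I would record that if $\varphi\colon F\to F'$ is a surjection of finite rank free groups, then $\operatorname{rank}(F)\ge\operatorname{rank}(F')$, since abelianizing produces a surjection $\mathbb Z^{\operatorname{rank}(F)}\to\mathbb Z^{\operatorname{rank}(F')}$. Applying this to the connecting homomorphisms $\varphi_{i+1,i}\colon F_{i+1}\to F_i$ shows that the ranks $r_i=\operatorname{rank}(F_i)$ form a non-decreasing sequence of natural numbers, so exactly one of two things happens: either $(r_i)$ is bounded or it is unbounded.

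In the bounded case the sequence is eventually constant, say $r_i=r$ for all $i\ge N$. For such $i$ the map $\varphi_{i+1,i}$ is a surjection between two free groups of the same finite rank $r$, and since finitely generated free groups are Hopfian this forces $\varphi_{i+1,i}$ to be an isomorphism. An inverse tower whose bonding maps are eventually isomorphisms has inverse limit isomorphic to any of its terms past $N$, so $G\cong F_N$ is a finite rank free group.

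The substance is the unbounded case, where I would show $G\cong\mb G$. Since $(r_i)$ is non-decreasing and unbounded I first pass to a cofinal subtower $G_1\leftarrow G_2\leftarrow\cdots$ whose ranks $\rho_1<\rho_2<\cdots$ are strictly increasing; passing to a cofinal subsystem does not change the inverse limit, and the new bonding maps $\psi_k$ remain surjective. The key technical lemma I would prove is that every surjection $\varphi\colon F\to F'$ of finite rank free groups is, after a change of basis, a standard projection: there are bases $z_1,\dots,z_n$ of $F$ and $y_1,\dots,y_m$ of $F'$ with $\varphi(z_i)=y_i$ for $i\le m$ and $\varphi(z_i)=1$ for $i>m$. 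This follows from Nielsen's theorem, because the images of a basis of $F$ form an $n$-element generating set of the rank $m$ free group $F'$, which Nielsen-reduces to a basis of $F'$ together with $n-m$ trivial elements; tracking the Nielsen transformations as automorphisms of $F$ then produces the desired basis $\{z_i\}$.

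With this lemma in hand I would build a basis of the whole tower coherently by induction. Having fixed a basis $y_1^{(k)},\dots,y_{\rho_k}^{(k)}$ of $G_k$, I want a basis of $G_{k+1}$ whose first $\rho_k$ members project onto it and whose remaining members lie in $\ker\psi_k$. The lemma supplies such a basis for \emph{some} basis of $G_k$; to match the prescribed one I transport the relevant automorphism of $G_k$ upward through the chosen preimages, which is legitimate because those preimages span a free factor of $G_{k+1}$. Iterating, the tower $G_1\leftarrow G_2\leftarrow\cdots$ becomes, in these coordinates, exactly the tower of standard projections between free groups of ranks $\rho_1<\rho_2<\cdots$, that is, the restriction of Higman's system $A_1\leftarrow A_2\leftarrow\cdots$ to the cofinal index set $\{\rho_k\}$. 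Its inverse limit is $\mb G$, so $G\cong\mb G$. I expect the main obstacle to be exactly this coherent, simultaneous choice of bases: the one-step lemma is cheap, but threading a \emph{prescribed} target basis through every level of an infinite tower, rather than settling for an unrelated nice basis at each stage, is what makes the identification with Higman's system go through.
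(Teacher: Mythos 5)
Your proof is correct and follows essentially the same route as the paper's: the same dichotomy on the rank sequence, the same structural fact about surjections of finite rank free groups (the paper cites Proposition 2.12 of Lyndon--Schupp where you invoke Nielsen reduction), and the same inductive coherent choice of bases, with the paper resolving the ``prescribed basis'' issue you highlight by pulling the basis of $F_{n-1}$ back through the isomorphism onto the complementary free factor --- which is exactly your transport step.
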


The following two lemmas are well known.

\begin{lem}\label{2}
If $G$ is an inverse limit of groups $\{F_\alpha\}$ and $G'$ is the
inverse limit of some cofinal sequence of $\{F_\alpha\}$, then $G$ is
isomorphic to $G'$.
\end{lem}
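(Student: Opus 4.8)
The plan is to produce an explicit isomorphism $G \cong G'$ via restriction, and to construct its inverse by ``propagating'' a compatible family backwards along the connecting homomorphisms. Write $J$ for the index set of $\{F_\alpha\}$ and let $J' \subseteq J$ be the cofinal subset carrying $G'$, so that for every $\alpha \in J$ there is some $\beta \in J'$ with $\alpha \le \beta$. Since the group operation in an inverse limit is computed coordinatewise, the restriction map $\rho\colon G \to G'$, $\rho(f) = f|_{J'}$, is automatically a group homomorphism, and it is well defined because the compatibility relations cutting out $G'$ form a subset of those cutting out $G$, whence $f|_{J'} \in G'$ whenever $f \in G$. For injectivity I would note that $f \in G$ is determined by its restriction: given $\alpha \in J$, choose $\beta \in J'$ with $\beta \ge \alpha$, and the defining relation $f(\alpha) = \phi_{\beta,\alpha}(f(\beta))$ shows that $f(\alpha)$ is recovered from $f(\beta)$, so $f|_{J'} = g|_{J'}$ forces $f = g$.

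The substantive step is surjectivity, that is, inverting $\rho$. Given a compatible family $h \in G'$, I would define an extension $\tilde f$ over all of $J$ by $\tilde f(\alpha) = \phi_{\beta,\alpha}(h(\beta))$, where $\beta \in J'$ is any index with $\beta \ge \alpha$. The point requiring care is that $\tilde f(\alpha)$ is independent of the chosen $\beta$. If $\beta_1, \beta_2 \in J'$ both dominate $\alpha$, then using directedness of $J$ together with cofinality of $J'$ I would pick a common upper bound $\delta \in J'$ with $\delta \ge \beta_1, \beta_2$; the composition law $\phi_{\beta_i,\alpha}\circ\phi_{\delta,\beta_i} = \phi_{\delta,\alpha}$ and the compatibility $h(\beta_i) = \phi_{\delta,\beta_i}(h(\delta))$ then give $\phi_{\beta_i,\alpha}(h(\beta_i)) = \phi_{\delta,\alpha}(h(\delta))$ for both $i$, so the two choices agree. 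In the application relevant to this paper the index set is $\mathbb{N}$, so any two upper bounds of $\alpha$ are comparable and this well-definedness reduces to a single use of the composition law.

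It then remains to check that $\tilde f \in G$ and that $\rho(\tilde f) = h$, after which $h \mapsto \tilde f$ is the desired inverse. Compatibility of $\tilde f$ under $\alpha \le \alpha'$ follows by choosing $\beta \in J'$ above $\alpha'$ and invoking $\phi_{\alpha',\alpha}\circ\phi_{\beta,\alpha'} = \phi_{\beta,\alpha}$, so that $\phi_{\alpha',\alpha}(\tilde f(\alpha')) = \tilde f(\alpha)$; and for $\beta \in J'$ the admissible choice $\beta$ itself yields $\tilde f(\beta) = \phi_{\beta,\beta}(h(\beta)) = h(\beta)$, giving $\rho(\tilde f) = h$. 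The extension is a homomorphism because it too is defined coordinatewise, and it is a two-sided inverse of $\rho$ by the injectivity argument above. The only delicate point in the whole argument is the well-definedness of the extension, which is precisely where cofinality (and directedness of the index set) enters; everything else is a routine consequence of the compatibility relations and the composition law for the connecting homomorphisms.
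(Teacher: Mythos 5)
Your proof is correct: the restriction map $\rho\colon G\to G'$ together with the extension $\tilde f(\alpha)=\phi_{\beta,\alpha}(h(\beta))$ is the standard argument, and you rightly identify the only delicate point --- well-definedness of the extension, which needs directedness of $J$ plus cofinality of $J'$ (trivial here since the relevant index set is $\mathbb{N}$). The paper states this lemma without proof, labeling it well known, so your write-up simply supplies the standard argument that the authors omit; there is no competing approach in the paper to compare against.
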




\begin{lem}\label{remark}
Any morphism of inverse systems which consists of isomorphisms induces an isomorphism of limits.
\end{lem}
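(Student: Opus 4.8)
The plan is to build the induced map directly from the componentwise isomorphisms and then produce its inverse by reversing each component. Suppose the morphism is given between inverse systems $\{F_\alpha, \phi_{\alpha,\beta}\}$ and $\{G_\alpha, \psi_{\alpha,\beta}\}$ over a common index set $J$, and consists of isomorphisms $h_\alpha \colon F_\alpha \to G_\alpha$ satisfying the compatibility condition $\psi_{\alpha,\beta} \circ h_\alpha = h_\beta \circ \phi_{\alpha,\beta}$ whenever $\alpha \geq \beta$. First I would define the candidate map $h$ on limits by the coordinatewise rule $h(f)(\alpha) = h_\alpha(f(\alpha))$ and verify that $h(f)$ really is an element of $\varprojlim G_\alpha$; this reduces to checking the coherence condition $h(f)(\beta) = \psi_{\alpha,\beta}(h(f)(\alpha))$, which falls out of substituting $f(\beta) = \phi_{\alpha,\beta}(f(\alpha))$ and then applying the compatibility square. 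Once $h$ is seen to land in the limit, the fact that $h$ is a homomorphism is immediate, since the group operations on both limits are computed coordinatewise and each $h_\alpha$ is a homomorphism.

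The key observation for invertibility is that the inverse isomorphisms $h_\alpha^{-1} \colon G_\alpha \to F_\alpha$ themselves constitute a morphism of inverse systems in the opposite direction. I would derive the required commutation $\phi_{\alpha,\beta} \circ h_\alpha^{-1} = h_\beta^{-1} \circ \psi_{\alpha,\beta}$ by composing the original compatibility identity with $h_\beta^{-1}$ on the left and $h_\alpha^{-1}$ on the right. By the construction in the previous paragraph, this reverse morphism induces a homomorphism $k \colon \varprojlim G_\alpha \to \varprojlim F_\alpha$, and because $h_\alpha^{-1} \circ h_\alpha$ and $h_\alpha \circ h_\alpha^{-1}$ are identities for every $\alpha$, the composites $k \circ h$ and $h \circ k$ act as the identity on each coordinate and hence are the identity on the respective limits. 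Therefore $h$ is an isomorphism.

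There is no deep obstacle here; the lemma is purely formal, which is why it can be quoted as well known. The only points requiring genuine care are the two places where the compatibility squares are invoked: first to confirm that the coordinatewise image of a coherent thread remains coherent, and second to verify that the pointwise inverses reassemble into a legitimate reverse morphism so that the induced $k$ exists. If one were to work with the more general notion of morphism allowing a reindexing map between distinct index sets, one would additionally need that reindexing to be order preserving and cofinal (so that Lemma \ref{2} applies); but in the present setting the two systems share the index set $J$ and this complication does not arise.
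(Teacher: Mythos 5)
Your proof is correct; the paper states this lemma without proof (citing it as well known), and your coordinatewise construction of the induced map together with the observation that the pointwise inverses form a reverse morphism is exactly the standard argument one would supply. The two places you flag as needing care — coherence of the image thread and the commutation of the inverse squares — are indeed the only nontrivial checks, and both are handled correctly.
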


\begin{proof}[Proof of Proposition \ref{a}]
Let $\pi_{i,j}$ be the connecting homomorphisms of $G$.  If the rank of $F_n$ is eventually constant then the connecting homomorphisms must eventually be isomorphisms (see Proposition 2.12 in \cite{LS}).  Hence $G$ is a finite rank free group. Otherwise by passing to a cofinal subsequence, we may assume that the rank of $F_n$ is a strictly increasing sequence.

Let $B_1$ be a basis for $F_1$.  By induction, suppose that for all $m<n$, $B_m\cup K_m$ is a basis for $F_m$ such that $\langle K_m\rangle\subset \ker(\pi_{m,m-1})$ and $\pi_{m,m-1}$ maps $B_m$ bijectively onto $B_{m-1}\cup K_{m-1}$.  

There exists a free basis $B_n'\cup K_n$ of $F_n$ with the property that $\langle K_n\rangle\subset \ker(\pi_{n,n-1})$ and $\pi_{n,n-1}$ restricted to $\langle B_n'\rangle$ is an isomorphism (again by Proposition 2.12 in \cite{LS}).  Now restricting $\pi_{n,n-1}$ to this set we may define  $B_n = \bigl(\pi_{n,n-1}|_{\langle B_n'\rangle}\bigr)^{-1}(B_{n-1})$.  Since $\pi_{n,n-1}$ restricted to $\langle B_n'\rangle$ is an isomorphism; $B_n$ is a free basis
for $\langle B_n'\rangle$.  It is a simple exercise to show that $B_n\cup K_n$
is still a free basis for $F_n$.

It is now trivial to find isomorphisms between the groups $F_n$ and $A_{|B_n\cup K_n|}$ which commute with the connecting homomorphisms.  The result follows from the Lemma \ref{remark} and Lemma \ref{2}.

\end{proof}

\begin{rmk}\label{remark2}Given a system of countable rank free groups $(F_n, \pi_{n,n-1})$ with inverse limit $G$, one can pass to the inverse system $(\pi_n(G), \pi_{n,n-1})$ without changing isomorphism types of the inverse limit where $\pi_n$ is the canonical projection of $G$ to $F_n$.  The new system then has surjective connecting homomorphisms.  However, $\pi_n(G)$ is a free group of possibly infinite rank.  Then as in the proof of Theorem \ref{a}, we can find a basis $B_n\cup K_n$ of $\pi_n(G)$ with the same properties as before.  After passing to a cofinal inverse system, we may assume that $B_n$, $K_n$ are eventually trivial, finite, or countably infinite.  Then the isomorphism type of the inverse limit is determined by the cardinalities of $B_n$, $K_n$.    This gives five possible isomorphism types.  Eda \cite{edaprivate} shows that these five types can all be realized and are distinct.
\end{rmk}

\section{Images of the Hawaiian earring group}\label{secimage}

Note that $A_i$ embeds in $\mb G$ by sending $\mathbf{a}_i $ to the element which is $1$ in the first $i-1$ coordinates and $\mathbf{a}_i$ in all other coordinates.  The proof of de Smit in \cite{smit} shows the embedding of $\mb H$ into $\mb G$ sends $a_i$ to $\mathbf a_i$.

We will give $\mb G$ a metric such that $\mb G$ under the induced
topology  is a topological group. In \cite{hig}, Higman defines a topology which is equivalent to our metric topology.  Let $d_i$ be the
$(0,1)$-metric on $B_i$, i.e. $d_i(x,y)=0$ if $x=y$ and $1$
otherwise.  For $(g_n),(h_n)\in\mb G$, let $d((g_n),(h_n)) =
\sum\limits_{n} \frac{1}{2^n}d_n(g_n,h_n)$.

\begin{rmk}\ulabel{converges}{Remark}
Under this topology, $g_i=(g_n^i)$ converges to $h=(h_n)$ if and only if for each $n$ there exists an $M(n)$ such that $g_n^i h_n^{-1}=1$ for all $i\geq M(n)$.  It follows that if $g_i$ converges $g_ig_{i+1}^{-1}$ must converge to $1$.  Suppose that $g_ig_{i+1}^{-1}$ converges to $1$.  Then $g_n^i$ considered as a sequence in $i$ is eventually constant for every $n$.  Hence $g_i$ converges.

\end{rmk}

We will leave it to the reader to verify that this topology makes $\mb G$ into a topological group.  An interesting note is that under this metric $\mb G$ is complete and all three of the sets $\mb H$ , $\mb G -\mb H$, and the free group generated by $\{\mathbf a_i \ | \ i\in \mb N\}$ which wee will denote by $\langle\mathbf a_1,\mathbf{a}_2, \cdots\rangle$ are dense.

The following two propositions of Higman demonstrate the elegance of this topology.  For completeness and to make the proof readily accessible to the reader, we will include their proofs here.

\begin{prop}Any endomorphism of $\mb G$  is continuous. \end{prop}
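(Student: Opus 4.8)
The plan is to reduce the statement to sequential continuity at the identity and then force a contradiction through infinite products. Since $\mb G$ carries the metric $d$ and is a topological group, an endomorphism $\phi$ is continuous if and only if it is sequentially continuous, hence if and only if it is sequentially continuous at $1$; so it suffices to show that $g_i\to 1$ implies $\phi(g_i)\to 1$. The sets $\ker P_N$ form a neighborhood base at $1$ (indeed $\ker P_N=\{g:g_n=1 \text{ for } n\le N\}$ is exactly the ball of radius $2^{-N}$), so by \ref{converges} the condition $g_i\to 1$ says precisely that each coordinate of $g_i$ is eventually trivial, while $\phi(g_i)\to 1$ says that for every $N$ we eventually have $\phi(g_i)\in\ker P_N$.

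Next I would set up the contradiction. If $\phi$ is discontinuous at $1$, some $\ker P_N$ witnesses it: for every $M$ there is an element of $\ker P_M$ whose image avoids $\ker P_N$. Choosing such elements as $M\to\infty$ produces a null sequence $g_i$ with $\phi(g_i)\notin\ker P_N$ for a single fixed $N$ and all $i$. Passing to a subsequence (using the coordinatewise description in \ref{converges}) I may assume $g_i\in\ker P_{c_i}$ with $c_i$ strictly increasing, so the supports of the $g_i$ march off to infinity. Then any ordered product of the $g_i$ converges: consecutive partial products differ by a single factor lying in $\ker P_{c_i}\to 1$, so the Cauchy criterion in the second half of \ref{converges} applies. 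In particular the tails $w_k=g_kg_{k+1}\cdots$ exist, lie in the normal subgroup $\ker P_{c_k}$, hence $w_k\to 1$, and satisfy $w_k=g_kw_{k+1}$.

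Applying $\phi$ to this identity yields the core relation $\phi(w_k)\phi(w_{k+1})^{-1}=\phi(g_k)\notin\ker P_N$, so the consecutive ratios of the sequence $\bigl(\phi(w_k)\bigr)_k$ do not tend to $1$; by \ref{converges} this sequence does \emph{not} converge in $\mb G$. The goal is then to contradict this by showing that $\phi(w_k)$ must in fact converge.

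The hard part will be exactly this last step, and I expect it to be the crux of the whole argument. A single coordinate carries too little information: reading everything inside $A_N$ one only learns $P_N\phi(w_k)=P_N\phi(g_k)\cdot P_N\phi(w_{k+1})$, and since $A_N$ is free these values can telescope, so nothing in one coordinate forces convergence. A naive cardinality argument also fails: although the continuum-many ordered products $\prod_{i\in S}g_i$ all map into the countable group $A_N$ under $P_N\phi$, two such products generally differ in infinitely many factors, so their images cannot be compared by any finite computation, and the would-be pigeonhole collapses. The approach I would take is Higman's: exploit completeness of $\mb G$ together with a self-referential, diagonal choice of a single convergent product $h$ — one in which the partial products are themselves fed back into the tail — so that the unknown image $P_N\phi$ of the tail can no longer absorb the growth and $P_N\phi(h)$ is forced to have infinite reduced length in $A_N$, which is absurd. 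Making this diagonal construction precise, so that it genuinely defeats the adversarial tail rather than merely displacing the difficulty by one step, is the real content of the proof.
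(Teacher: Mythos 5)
Your reduction is sound as far as it goes: continuity of a homomorphism between topological groups does follow from sequential continuity at $1$; your extraction of a null sequence $(g_i)$ with $\phi(g_i)\notin\ker P_N$ from a hypothetical discontinuity is correct; the tails $w_k=g_kg_{k+1}\cdots$ do exist by the Cauchy criterion of \ref{converges}; and the identity $\phi(w_k)\phi(w_{k+1})^{-1}=\phi(g_k)$ does show that the sequence $\bigl(\phi(w_k)\bigr)$ cannot converge. But the argument stops exactly where the theorem begins. Showing that $\bigl(\phi(w_k)\bigr)$ \emph{must} converge is not a smaller problem than the one you started with --- it is continuity of $\phi$ at $1$ along the particular null sequence $(w_k)$, i.e.\ a restatement of the goal --- and the paragraph you devote to it describes what a proof would have to accomplish (``a self-referential, diagonal choice \dots is the real content of the proof'') rather than supplying one. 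You have correctly diagnosed that a single coordinate carries too little information and that a crude cardinality count fails, but the diagonal construction that ``defeats the adversarial tail'' is precisely the missing mathematics, and nothing in the write-up pins it down. As it stands, this is an accurate map of where the difficulty lives, not an argument that crosses it.

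For comparison, the paper does not carry out that construction either; it quotes it. The entire proof there is: by Higman's Theorem 1 (equivalently Cannon--Conner, Theorem 4.4 of \cite{cc3}), every homomorphism from $\mb G$ to a finite rank free group factors through a finite projection, so $P_i\circ\phi$ factors through some $P_{n(i)}$; hence $d(g,h)\leq 2^{-n(i)}$ forces $P_{n(i)}(g)=P_{n(i)}(h)$, whence $P_i\phi(g)=P_i\phi(h)$ and $d(\phi(g),\phi(h))\leq 2^{-i}$. So the honest completion of your proposal is either to cite that factorization theorem --- at which point the contradiction scaffolding with the $w_k$ becomes unnecessary and the proof collapses to the two lines above --- or to actually execute the diagonal argument (Higman's runs roughly by forming nested products $h_k=g_k\,h_{k+1}^{m_k}$ with rapidly growing exponents and extracting a divisibility or length contradiction from $P_N\phi(h_1)$ in $A_N$). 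Without one of these, there is a genuine gap at the crux.
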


\begin{proof}
Let $\phi: \mb G\to\mb G$ be an endomorphism.  Then $d((g_n),(h_n))\leq \frac{1}{2^i}$ if and only if $g_n= h_n$ for all $n\leq i$.  Higman in \cite{hig} showed that $P_i\circ\phi$ factors through some $P_{n(i)}$.  (see Theorem 1)  Cannon and Conner have shown that the same holds true for any $\phi: \mb H \to F$ where $F$ is a free group. (see Theorem 4.4 in \cite{cc3}) Hence, $P_i\circ\phi = \phi\circ P_{n(i)}$ for some $n(i)$ which depends on $i$.  Thus $d((g_n),(h_n)) \leq \frac{1}{2^{n(i)}}$ implies that $d(\phi((g_n)),\phi((h_n))) \leq \frac{1}{2^{i}}$.
\end{proof}

\begin{prop}
Any set function $\phi$ from $\{\mathbf {a}_1,\mathbf{a}_2,\cdots \}$ to $\mb G$ such that $d(\phi(\mathbf a_i),1)$ converges to $0$ extends to an endomorphism of $\mb G$.
\end{prop}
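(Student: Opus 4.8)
The plan is to build the extension by substitution followed by a limit, using the universal property of free groups together with completeness of $\mb G$ and continuity of its group operations. Write $b_i := \phi(\mathbf a_i)$, and let $\psi\colon \langle \mathbf a_1,\mathbf a_2,\dots\rangle \to \mb G$ be the homomorphism determined by $\mathbf a_i\mapsto b_i$. For $g\in\mb G$ I would define the extension by
\[
\widehat\phi(g) := \lim_{n\to\infty}\psi\bigl(P_n(g)\bigr),
\]
where $P_n(g)\in A_n$ is regarded as an element of the free group $\langle\mathbf a_1,\dots,\mathbf a_n\rangle$ and fed into $\psi$. The work is then to show this limit exists, that $\widehat\phi$ is a homomorphism, and that it restricts to $\phi$ on the generators.

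The key step is convergence of $\bigl(\psi(P_n(g))\bigr)_n$. By the convergence criterion recorded in the remark above, it suffices to check that each coordinate $P_m\bigl(\psi(P_n(g))\bigr)$ is eventually constant in $n$. This is where the hypothesis $d(\phi(\mathbf a_i),1)\to 0$ enters: for each $m$ it supplies an index $N(m)$ with $P_m(b_i)=1$ for all $i\ge N(m)$, since $d(b_i,1)\le 2^{-m}$ forces the first $m$ coordinates of $b_i$ to be trivial. Consequently the composite homomorphism $P_m\circ\psi\colon A_n\to A_m$ kills every generator $\mathbf a_i$ with $i\ge N(m)$, so it factors through $P_{n,N(m)-1}$. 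Since $P_{n,N(m)-1}\bigl(P_n(g)\bigr)=P_{N(m)-1}(g)$ is independent of $n$, the coordinate $P_m\bigl(\psi(P_n(g))\bigr)$ is constant for all $n\ge N(m)-1$, and the limit exists.

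Granting convergence, the homomorphism property is almost immediate: because $P_n$ and $\psi$ are homomorphisms, $\psi\bigl(P_n(g)\bigr)\,\psi\bigl(P_n(h)\bigr)=\psi\bigl(P_n(g)P_n(h)\bigr)=\psi\bigl(P_n(gh)\bigr)$ for every $n$, and letting $n\to\infty$ while using that $\mb G$ is a topological group (so multiplication is jointly continuous) gives $\widehat\phi(g)\,\widehat\phi(h)=\widehat\phi(gh)$. That $\widehat\phi$ extends $\phi$ is equally direct: for $g=\mathbf a_i$ one has $P_n(\mathbf a_i)=\mathbf a_i$ for all $n\ge i$, so $\psi\bigl(P_n(\mathbf a_i)\bigr)=b_i$ is eventually constant and $\widehat\phi(\mathbf a_i)=b_i=\phi(\mathbf a_i)$.

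The only delicate point is the convergence step, and within it the observation that deleting the high-index letters from the word $P_n(g)$ and then substituting yields the same element of $A_m$ as substituting into the truncated word $P_{N(m)-1}(g)$; this is exactly what the factorization through $P_{n,N(m)-1}$ encodes, and it hinges on $P_m(b_i)=1$ for $i\ge N(m)$ so that the deleted letters contribute trivially even when their removal creates further cancellation. Everything else — well-definedness, the homomorphism identity, and compatibility on the generators — reduces to continuity of multiplication and the universal property of free groups, and should require no further estimates.
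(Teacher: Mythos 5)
Your proof is correct, and it follows the same overall strategy as the paper's: define $\psi$ on the dense free subgroup $\langle\mathbf a_1,\mathbf a_2,\cdots\rangle$ by the universal property, use the coordinatewise convergence criterion of the Remark to pass to the limit, and deduce the homomorphism identity from continuity of multiplication. The genuine difference is in how the limit is taken. The paper extends $\psi$ by continuity along an \emph{arbitrary} sequence in the free subgroup converging to $g$, which obliges it to prove that $g_i\to 1$ implies $\psi(g_i)\to 1$ and then to check independence of the chosen sequence; you instead use the canonical approximating sequence $\psi(P_n(g))$, which makes well-definedness automatic and replaces the paper's word-by-word evaluation of $w_i(\{\phi(\mathbf a_j)_n\})$ with the cleaner factorization $P_m\circ\psi|_{A_n} = \bigl(P_m\circ\psi|_{A_{N(m)-1}}\bigr)\circ P_{n,N(m)-1}$. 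This factorization is in fact the sharper way to run the key estimate: it makes explicit that the relevant truncation level for coordinate $m$ is $N(m)-1$ (determined by where the images $P_m(b_i)$ die), whereas the paper's choice of the index $M'$ is tied to coordinate $n$ of the $g_i$ themselves and the cancellation created by deleting high-index letters is glossed over. What the paper's version buys in exchange is that sequential continuity of the extension on the dense subgroup comes out of the argument directly, rather than being recovered afterwards from the proposition that every endomorphism of $\mb G$ is continuous. Two cosmetic points: $d(b_i,1)\le 2^{-m}$ should be a strict inequality (or $\le 2^{-(m+1)}$) to force the first $m$ coordinates of $b_i$ to be trivial, since equality can occur with $d_m=1$; and you should remark that the coordinatewise-stabilized limit is a compatible thread, hence lies in $\mb G$ --- this is immediate because every approximant $\psi(P_n(g))$ already lies in $\mb G$, and is exactly the content of the completeness assertion you cite.
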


\begin{proof}
Let $\phi:\{\mathbf {a}_1,\mathbf{a}_2,\cdots \}\to \mb G$ be a set function such that $d(\phi(\mathbf a_i),1)$ converges to $0$.  Then $\phi$ will extend to the free group $\langle\mathbf a_1,\mathbf{a}_2, \cdots\rangle\leq \mb G$.  We want to be able to extend $\phi$ to all of $\mb G$.

Suppose that $g_i\in \langle\mathbf a_1,\mathbf{a}_2, \cdots\rangle$ converges in $ \mb G$.  We must show that $\phi(g_i)$ also converges.   By \ref{converges}, it is enough to show that $\phi(g_i)\phi(g_{i+1})^{-1}$ converges.  Since $\phi$ is a homomorphism on $\langle\mathbf a_1,\mathbf{a}_2, \cdots\rangle$, $\phi(g_i)\phi(g_{i+1})^{-1}= \phi(g_ig_{i+1}^{-1})$.  Hence it is sufficient to show that if $g_i\to1$ then $\phi(g_i)\to 1$.

Let $\phi(\mathbf a_i) = (\phi(\mathbf a_i)_n)$.  Suppose that $g_i = (g^i_n)$ converges to $1$.  Fix $n$.  We will show that $\phi(g_i)_n$ when considered as a sequence in $i$ is eventually trivial.  Fix $M$ such that $\phi(\mathbf a_i)_n = 1$ for all $i\geq M$.  Fix $M'$ such that $g^i_n = 1$ for all $i\geq M'$.  By construction, $g_i$ is a word $w_i(\{\mathbf a_j\})$ in $\langle\mathbf a_1,\mathbf{a}_2, \cdots\rangle$.  Then $\phi(g_i) = w_i(\{\phi(\mathbf a_j)\})$; hence, $\phi(g_i)_n = w_i(\{\phi(\mathbf a_j)_n\})$.  Then for all $i\geq \max\{M,M'\}$, $\phi(g_i)_n = w_i(\{\phi(\mathbf a_j)_n\})= 1$.

Suppose that $g_i, g_i'\in \langle\mathbf a_1,\mathbf{a}_2, \cdots\rangle$ converge to the same element of $ \mb G$.  Then $g_i'g_i^{-1}$ converges to $1$.  Then $\phi(g_i')\phi(g_i^{-1}) = \phi(g_i'g_i^{-1})\to 1$.  Thus $\phi$ extends to a well defined continuous function $\overline\phi:\mb G\to\mb G$ which is independent of the chosen sequence.

Suppose that $g,h\in\mb G$.  Then there exists $g_i,h_i \in \langle\mathbf a_1,\mathbf{a}_2, \cdots\rangle$ such that $g_i\to g$ and $h_i\to h$.  Since $\overline\phi$ is independent of the chosen sequence, $\phi(g_i)\phi(h_i) \to \overline\phi(g)\overline\phi(h)$ and $\phi(g_i)\phi(h_i) = \phi(g_ih_i) \to \overline\phi(gh)$.  Hence $\phi$ extends to a homomorphism $\overline\phi$.
\end{proof}

For a path $\alpha: [0,t]\to X$, we will also use $\overline\alpha$ to represent the path where $\overline\alpha(s) = \alpha(t-s)$ . We will also use the following theorem.

\begin{thm}\ulabel{cont}{Theorem}[Eda \cite{eda}]
Let $\psi:\mathbb H \to\pi_1(X,x_0)$ a homomorphism into the fundamental group of a
one-dimensional Peano continuum $X$. Then there exists a continuous
function $f:(\textbf{E},0) \to (X,x)$ and a path $\alpha:(I,0,1)\to (X,x_0,x)$, with the property that $f_* =\widehat\alpha\circ\phi$.  Additionally, if the image of $\psi$ is uncountable the $\alpha$ is unique up to homotopy rel endpoints.
\end{thm}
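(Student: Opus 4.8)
The plan is to realise $\psi$ geometrically, building the map $f$ one circle at a time out of canonical representatives of the classes $\psi(a_n)$, and then showing that these representatives degenerate quickly enough for the assembled map to be continuous at the wedge point $0$. Since $X$ is a one-dimensional Peano continuum, every element of $\pi_1(X,x_0)$ has an essentially unique reduced representative, by the reduced-path theory of Cannon--Conner \cite{cc3} and Eda \cite{eda}; fix such a reduced loop $r_n$ for $\psi(a_n)$, based at $x_0$. The path $\alpha$ should be the maximal common initial segment of the $r_n$: after factoring it out one writes $r_n\simeq \alpha\cdot s_n\cdot\overline{\alpha}$, where $s_n$ is a reduced loop based at the tip $x:=\alpha(1)$, so that $\widehat{\alpha}(\psi(a_n))=[s_n]$.

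The heart of the matter---and the step I expect to be the main obstacle---is a null-sequence lemma: after extracting $\alpha$, the residual loops $s_n$ converge to the constant loop at $x$, i.e. $\diam{s_n}\to 0$. This is where one-dimensionality is indispensable. I would use that the fundamental group of a one-dimensional space is noncommutatively slender (Eda \cite{eda}), so that $\psi$ is \emph{standard}: its value on any reduced infinite word in the $a_n$ is computed by the correspondingly reduced infinite concatenation of the $s_n$ (conjugated by $\alpha$). If the $s_n$ failed to form a null sequence, one could choose indices $n_1<n_2<\cdots$ whose residual loops stay a fixed distance from $x$; feeding the infinite product $\prod_k a_{n_k}\in\mathbb H$ through the standard homomorphism $\psi$ would then require an infinite concatenation of loops that does not converge to a continuous path, contradicting the fact that $\psi\bigl(\prod_k a_{n_k}\bigr)$ is a genuine element of $\pi_1(X,x_0)$. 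The homotopically Hausdorff property of one-dimensional Peano continua rules out the hidden cancellations that might otherwise mask such a divergence, and simultaneously guarantees that the common stem $\alpha$ is well defined.

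With the null-sequence lemma in hand the construction is essentially formal. I would define $f\colon(\textbf{E},0)\to(X,x)$ by sending the $n$th circle $a_n$ onto $s_n$ (suitably reparametrised) and $0\mapsto x$; continuity away from $0$ is clear, and continuity at $0$ is exactly the assertion $\diam{s_n}\to 0$ together with $s_n\to x$. By construction $f_*(a_n)=[s_n]=\widehat{\alpha}(\psi(a_n))$, so $f_*$ and $\widehat{\alpha}\circ\psi$ agree on every generator; since both homomorphisms preserve reduced infinite products---$f_*$ because $f$ is continuous, and $\widehat{\alpha}\circ\psi$ by noncommutative slenderness---they agree on all of $\mathbb H$, giving $f_*=\widehat{\alpha}\circ\psi$.

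Finally, for uniqueness when $\operatorname{im}\psi$ is uncountable, the point is that the wild point $x$ and the stem $\alpha$ are intrinsic to $\psi$. An uncountable image forces infinitely many of the $r_n$ to be nontrivial, and by reduced-path uniqueness together with the homotopically Hausdorff property these loops possess a well-defined maximal common initial segment, unique up to homotopy rel endpoints; any valid pair $(f,\alpha)$ must have this segment as its $\alpha$. When the image is only countable the stem can be ambiguous---for instance if almost all $\psi(a_n)$ are trivial---which is exactly why the hypothesis is needed. Thus $\alpha$ is determined up to homotopy rel endpoints.
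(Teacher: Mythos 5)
The paper does not actually prove this statement: it is quoted as a theorem of Eda \cite{eda}, with a pointer to \cite{ck} for an alternative proof, so there is no internal argument to compare yours against. Judged on its own terms, your outline follows the right general shape (reduced representatives, a conjugating stem $\alpha$, a null-sequence lemma, assembly of $f$), but it rests on a false premise that makes the central step circular. You assert that the fundamental group of a one-dimensional space is noncommutatively slender, and you use this to conclude that $\psi$ is ``standard,'' i.e.\ that $\psi\bigl(\prod_k a_{n_k}\bigr)$ is computed by the reduced infinite concatenation of the residual loops $s_{n_k}$. That assertion is false: $\pi_1(\textbf{E},0)=\mathbb H$ is itself the fundamental group of a one-dimensional Peano continuum and is not noncommutatively slender (the identity homomorphism does not factor through any finite-rank free quotient, as $\mathbb H$ is uncountable). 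What one-dimensionality actually gives you is unique reduced path representatives, the homotopically Hausdorff property, and injectivity into the first \v{C}ech homotopy group; none of these says that an abstract homomorphism $\psi$ commutes with infinite products. Indeed, the claim that $\psi$ carries infinite products to infinite concatenations is essentially equivalent to the conclusion $f_*=\widehat\alpha\circ\psi$ you are trying to establish, so both your proof of $\operatorname{diam}(s_n)\to 0$ and your extension of the identity $f_*(a_n)=\widehat\alpha(\psi(a_n))$ from generators to all of $\mathbb H$ assume what is to be proved. This is exactly where the real work in Eda's argument lies.

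There are also secondary problems with the construction of $\alpha$. The ``maximal common initial segment of the $r_n$'' is not well defined and is not in general the right path: some $\psi(a_n)$ may be trivial (making the common initial segment of all $r_n$ degenerate even when the correct $\alpha$ is not), and the correct conjugating path is governed by the tail behaviour of the sequence --- by the point $x$ at which the images of infinite products force the loops to accumulate, a point where $X$ fails to be semilocally simply connected. Locating that point, and proving that after conjugation the residual loops form a null sequence, requires an argument that does not presuppose continuity of the limit map; your sketch does not supply one. The uniqueness claim for $\alpha$ in the uncountable-image case inherits the same difficulty, since it is argued from the same ill-defined ``common stem.''
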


Another proof, as well as a proof for a planar version, of this theorem can be found in the Masters Thesis of the second author (see \cite{ck}).  Cannon and Conner in \cite {cc3} showed that in one dimensional spaces there exists a unique (up to reparametrization) reduced representative for each path class.  We will use $[\cdot]_r$ (or $\phi(\cdot)_r$) to represent the unique reduced representative for the path class $[\cdot]$ (or  $\phi(\cdot)$).

We are now ready to give our counter example.  We will begin by defining a set function $\phi:\{\mathbf {a}_1,\mathbf{a}_2,\cdots \}\to \mb G$ by
 $$\phi(\mathbf{a}_i)= \begin{cases} \mathbf{a}_{i} &   \text{if $i$ is odd}
\\ \mathbf{a}_1\mathbf{a}_{i}{\mathbf{a}_1}^{-1} &   \text{if $i=2\mod4$} \\ \mathbf{a}_{i-2} &
\text{if $i=0\mod4$} \end{cases}$$

Then $\phi$ extends to an endomorphism of $\mb G$ which we may then restrict to the naturally embedded $\mb H$.  Thus after extending and restricting, $\phi: \mb H \to \mb G$ is a homomorphism with uncountable image.  Suppose there existed an isomorphism $\psi: \phi(\mb H) \to \pi_1(\textbf{E},0)$.  Then $\psi\circ\phi: \mb H\to \pi_1(\textbf{E},0)$ is a homomorphism from $\mb H$ to a one-dimensional Peano continuum which by \ref{cont} must be conjugate to a homomorphism induced by a continuous function.  Let $T$ the be the path such that $\widehat{T}\circ \phi$ is induced by a continuous function.   Then by construction $\psi\circ\phi(a_{4i-2})= \psi(a_1)^{-1}\psi(\mathbf{a}_{4i-2})\psi(a_1)$ and $\psi\circ\phi(a_{4i})= \psi(\mathbf{a}_{4i-2})$ which imply that

\begin{align*}[\overline T* \psi\circ\phi(a_{4i-2})_r*T] &= [\overline T*\bigl(\psi(a_1)^{-1}\psi(\mathbf{a}_{4i-2})\psi(a_1)\bigr)_r*T] \\ &= [\overline{T}*\bigl(\psi(a_1)^{-1}\psi\circ\phi(a_{4i})\psi(a_1)\bigr)_r*T]\\ &= [\overline{T}*\overline{\psi(a_1)_r}*T*\overline T* \psi\circ\phi(a_{4i})_r *T * \overline T *\psi(a_1)_r*T] \\ &= [\bigl(\overline{\overline T*\psi(a_1)_r*T}\bigr)*\overline T* \psi\circ\phi(a_{4i})_r *T * \bigl(\overline {T} *\psi(a_1)_r*T\bigr)] \\ &= [\overline{\overline T*\psi(a_1)_r*T}]\cdot[\overline T* \psi\circ\phi(a_{4i})_r *T ]\cdot[\overline {T} *\psi(a_1)_r*T]\end{align*}

By our choice of $T$, $\{[\overline T* \psi\circ\phi(a_{4i-2})_r*T]_r\}$ and $\{[\overline T* \psi\circ\phi(a_{4i})_r*T]_r\}$ are null sequences of loops in $\textbf{E}$. However, the second sequence of loops is conjugate to the first by a non-trivial loop $[\overline {T} *\psi(a_1)_r*T]_r$ which is a  contradiction.

\bibliographystyle{plain}
\bibliography{planarbib}

\end{document}